\titleformat{\section}{\normalfont\scshape\centering}{\thesection}{1em}{}
\titleformat{\subsection}{\bfseries}{\thesubsection}{1em}{}
\newtheorem{theorem}{Theorem}[section]
\newtheorem{lemma}[theorem]{Lemma}
\theoremstyle{definition}
\numberwithin{equation}{section}
\newcommand{\ord}{\textup{ord}}
\author{Olli J\"arviniemi}
\address{Department of Mathematics and Statistics, P.O. Box 68, 00014 Helsinki, Finland}
\email{olli.jarviniemi@helsinki.fi}
\title{Positive lower density for prime divisors of generic linear recurrences}
\date{}
\begin{document}
\begin{abstract}
Let $d \ge 3$ be an integer and let $P \in \mathbb{Z}[x]$ be a polynomial of degree $d$ whose Galois group is $S_d$. Let $(a_n)$ be a linearly recuresive sequence of integers which has $P$ as its characteristic polynomial. We prove, under the generalized Riemann hypothesis, that the lower density of the set of primes which divide at least one element of the sequence $(a_n)$ is positive.
\end{abstract}

\maketitle

\section{Introduction}
\label{sec:intro}

Given a sequence of integers, it is natural to consider the set of primes which divide at least one of its values (the prime divisors of the sequence). Here we consider the prime divisors of linearly recursive sequences. We assume that the elements $a_1, a_2, \ldots$ of the sequence and the coefficients $c_i$ in the defining recursion
$$a_{n+d} + c_{d-1}a_{n+d-1} + \ldots + c_0a_n = 0$$
are integers. The order of $(a_n)$ is (the minimum possible value of) $d$.

It is a well-known result often attributed to Pólya \cite{polya} that, excluding degenerate cases, a linearly recursive sequence has infinitely many prime divisors. One naturally wonders: how dense is the set of prime divisors of a linearly recursive sequence (with respect to the set of primes)?

The case of second order sequences has been studied in a number of works under the assumption of the generalized Riemann hypothesis (GRH), under which it is known that the density of prime divisors exists, is positive  (unless there are only finitely many prime divisors) and can, at least in principle, be computed explicitly (see \cite{ballot}, \cite{hasse}, \cite{lagarias}, \cite{moree}, \cite[Section 8.4]{moreeSurvey}, \cite{moree-stevenhagen2}, \cite{moree-stevenhagen}, \cite{moree-stevenhagen3}, \cite{stephens}, \cite{stevenhagen}). Unconditional results are much more modest: in \cite{mss} it is proved that the number of primes $p \le x$ which are prime divisors of such a sequence is at least of magnitude $\log x$.

Higher order sequences have received considerably less attention. In \cite{roskamPD} Roskam states that ``Essentially nothing is known for sequences of order larger than $2$'', and it is mentioned that some very non-generic cases can be handled (see \cite{ballot}). Roskam proceeds to proving that, under a certain generalization of Artin's conjecture on primitive roots, ``generic'' linear recurrences have a positive lower density of prime divisors. However, we note that a much stronger result under a significantly weaker assumption follows directly from the work of Niederreiter \cite{niederreiter}. Nevertheless, while the regular version of Artin's conjecture has been proven by Hooley \cite{hooley} under GRH (see \cite{moree} for a survey), even the weakened version of the generalization seems to be out of reach under standard conjectures. See Section \ref{sec:discussion} for more discussion.

Here we prove, under GRH, that the set of prime divisors of a ``generic'' linear recurrences has positive lower density. This seems to be the first such result which is applicable to ``almost all'' sequences and which assumes only standard conjectures.

\begin{theorem}
\label{thm:main}
Assume GRH. Let $d \ge 3$ be an integer and let $P \in \mathbb{Z}[x]$ be a polynomial whose Galois group is the symmetric group $S_d$ and such that the quotient of no two roots of $P$ is a root of unity. Let $(a_n)$ be a linearly recursive sequence of integers whose characteristic polynomial is $P$. The set of primes which divide at least one element of the sequence $(a_n)$ has a lower density of at least $1/(d-1)$. In particular, this lower density is strictly positive. 
\end{theorem}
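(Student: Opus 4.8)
The plan is to produce the required primes from a single, explicitly described Chebotarev class. Write $\alpha_1,\dots,\alpha_d$ for the roots of $P$ (distinct, since $\Gal = S_d$) and $K$ for its splitting field, so that $a_n=\sum_{i=1}^d\gamma_i\alpha_i^n$ for constants $\gamma_i\in K$; since $P$ is irreducible and $(a_n)$ genuinely has order $d$, the $\gamma_i$ form a Galois orbit and are all nonzero. I would discard the finitely many primes $p$ that ramify in $K$, divide the leading coefficient, the discriminant, the constant term $c_0$, or a numerator or denominator of some $\gamma_i$; these form a density-zero set. For the remaining $p$ the reduction $\bar P=P\Mod p$ is separable, the sequence $(a_n)\Mod p$ is purely periodic, and the factorization type of $\bar P$ is the cycle type of the Frobenius class $\sigma_p\subset S_d$. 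I would restrict attention to primes for which $\sigma_p$ is a $(d-1)$-cycle, i.e. $\bar P=(x-\beta)g(x)$ with $\beta\in\mathbb{F}_p^\ast$ and $g$ irreducible of degree $d-1$. A $(d-1)$-cycle is obtained by fixing one of $d$ points and cycling the rest in $(d-2)!$ ways, so such elements number $d!/(d-1)$, and by the Chebotarev density theorem the corresponding primes have density exactly $1/(d-1)$. It therefore suffices to show that all but a density-zero subset of these primes divide some $a_n$.

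Reducing the explicit formula modulo a prime above $p$ and grouping roots into Frobenius orbits gives, for such $p$,
\[
a_n \equiv \bar\gamma_0\beta^n + \mathrm{Tr}_{\mathbb{F}_q/\mathbb{F}_p}\!\big(\bar\gamma_1\bar\alpha^n\big) \Mod p, \qquad q=p^{\,d-1},
\]
where $\bar\alpha\in\mathbb{F}_q$ is a root of $g$ and $\bar\gamma_0\in\mathbb{F}_p^\ast$, $\bar\gamma_1\in\mathbb{F}_q^\ast$ are the reductions of the corresponding coefficients. I want to exhibit an $n$ with $a_n\equiv0$. It is instructive to first note the easier $d$-cycle case ($\bar P$ irreducible, density $1/d$): there the linear term is absent, $a_n\equiv\mathrm{Tr}_{\mathbb{F}_{p^d}/\mathbb{F}_p}(\bar\gamma\bar\alpha^n)$, and if $\bar\alpha$ runs through a large enough cyclic subgroup then the $\mathbb{F}_p$-linear functional $x\mapsto\mathrm{Tr}(\bar\gamma x)$, whose kernel is a hyperplane, is forced to vanish somewhere on it. The extra term in the $(d-1)$-cycle case is the price paid for upgrading the density from $1/d$ to $1/(d-1)$.

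The role of GRH is to guarantee that $\bar\alpha$ has large multiplicative order for almost all of the relevant primes. Because no quotient $\alpha_i/\alpha_j$ is a root of unity, the Kummer extensions $K(\zeta_\ell,\alpha_i^{1/\ell})$ have the expected large degree, and a GRH-conditional effective Chebotarev argument (in the style of Hooley's treatment of Artin's conjecture) then shows that, outside a density-zero set of primes, $\ord(\bar\alpha)\ge q^{1-\eps}$. For $d\ge3$ this makes the period $\pi=\lcm(\ord\beta,\ord\bar\alpha)\ge q^{1-\eps}$ far larger than $p$.

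Finally I would count zeros over one period by additive characters:
\[
\#\{0\le n<\pi : a_n\equiv0\} = \frac{\pi}{p} + \frac{1}{p}\sum_{t\in\mathbb{F}_p^\ast}\ \sum_{n=0}^{\pi-1}\psi\!\big(t\bar\gamma_0\beta^n\big)\,\psi\!\big(t\,\mathrm{Tr}(\bar\gamma_1\bar\alpha^n)\big),
\]
the main term $\pi/p\ge q^{1-\eps}/p$ tending to infinity with $p$. The crux of the argument—and the step I expect to be the main obstacle—is bounding the inner mixed exponential sum along the cyclic subgroup generated by $(\beta,\bar\alpha)$ in $\mathbb{F}_p^\ast\times\mathbb{F}_q^\ast$ by something smaller than $\pi/p$. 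When $\bar\alpha$ is primitive this is a complete sum to which Weil's bound applies, but for general large order it is an incomplete character sum over a multiplicative subgroup, and in small degree (already $d=3$, where $q=p^2$ and the main term is only of size $p$) the naive Weil estimate is merely of the same order as the main term. Overcoming this borderline will require either a more careful treatment of these subgroup sums or an averaging argument over the primes $p$ (bounding first and second moments of the zero-count), rather than a bound for each individual prime.
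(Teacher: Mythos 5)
The central gap is your use of GRH. Hooley's method, and Lenstra's generalization on which the paper's Lemma \ref{lem:artin} rests, controls the multiplicative order of the reduction of a fixed algebraic number only at primes of residue degree one, i.e., for an element of the prime field $\mathbb{F}_p$. You need the order of $\bar\alpha$, a root of the degree-$(d-1)$ factor, inside $\mathbb{F}_{p^{d-1}}$. No GRH-conditional argument is known to give $\ord(\bar\alpha)\ge q^{1-\eps}$ (nor even $\gg p^{d/2+1}$, the threshold at which Niederreiter's equidistribution theorem would immediately finish the proof): the paper's discussion section states explicitly that, apart from Roskam's degree-$2$ case, all such ``higher-degree Artin'' statements are open and that Hooley's argument does not adapt without new ideas. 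This is not a technicality to be filled in later; it is precisely the obstruction the paper is built to circumvent. The paper instead substitutes $n = k(p^{d-1}-1)/(p-1)+r$, so that the degree-$(d-1)$ roots enter only through their norms $N/\varphi(\alpha_1)\in\mathbb{F}_p$; the congruence $a_n\equiv 0$ becomes $\varphi(\alpha_1^{d+1}/N)^k = (\text{expression in } r)$, and the only order that must be large is that of the degree-one quantity $\alpha_1^{d+1}/N$ modulo $p$, which GRH does control via Lemma \ref{lem:artin}. Solvability in $(k,r)$ is then established not by exponential sums but by a Kummer-theoretic linear-disjointness argument for the fields $K(\zeta_h, b_r^{1/h})$ combined with Chebotarev, with Zannier's theorem on Pisot's $d$th root conjecture ruling out the degenerate case in which the relevant norm sequence is a perfect power along an arithmetic progression.

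A second, independent gap is the character sum estimate you flag yourself: even granting large order, the mixed incomplete sum over the cyclic subgroup generated by $(\beta,\bar\alpha)$ in $\mathbb{F}_p^\ast\times\mathbb{F}_q^\ast$ is not a complete sum to which Weil applies, and already for $d=3$ the available bounds are of the same size as the main term $\pi/p$. So as written the proposal leaves both of the steps it depends on unproved; only its opening Chebotarev computation of the density $1/(d-1)$ matches the paper.
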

The assumption on the quotients of the roots of $P$ (the \emph{non-degeneracy}) follows from the assumption on the Galois group for $d \ge 4$.

Note that almost all polynomials of degree $d$ have Galois group isomorphic to $S_d$, so the result applies to ``$100\%$'' of linear recurrences. The proof actually works for a slightly wider class of recurrences, and proves that almost all primes $p$ such that $P$ has suitable factorization modulo $p$ are prime divisors of the sequence.

\begin{theorem}
\label{thm:general}
Assume GRH. Let $P \in \mathbb{Z}[x]$ be a polynomial which has the following properties.
\begin{itemize}
\item $d := \deg(P) \ge 3$.
\item $P$ is irreducible.
\item There are infinitely many primes $p$ such that $P$ factorizes as the product of a linear polynomial and an irreducible polynomial of degree $d-1$ modulo $p$, that is, the Galois group of $P$ contains an element whose cycle type is $(1, d - 1)$.
\item If $|P(0)| > 1$, the roots of $P$ are multiplicatively independent, and if $|P(0)| = 1$, some (or, equivalently, any) $d-1$ roots of $P$ are multiplicatively independent.
\end{itemize}
Let $(a_n)$ be a linearly recursive sequence of integers whose characteristic polynomial is $P$. Then almost all primes $p$ such that $P$ factorizes as the product of irreducibles of degree $1$ and $d-1$ modulo $p$ divide at least one element of the sequence $(a_n)$. In particular, the set of primes which divide at least one element of the sequence $(a_n)$ has a strictly positive lower density.
\end{theorem}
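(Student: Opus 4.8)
The plan is to work prime by prime among the primes $p$ of the prescribed splitting type and to detect divisibility through the explicit shape of $(a_n)$ modulo $p$. Fix such a $p$ (avoiding the finitely many $p \mid \mathrm{disc}(P)\,P(0)$). The linear factor gives a root $\beta \in \mathbb{F}_p$ and the degree-$(d-1)$ factor a root $\gamma$ generating $\mathbb{F}_{p^{d-1}}$, with conjugates $\gamma,\gamma^p,\dots,\gamma^{p^{d-2}}$. Since $P$ is irreducible it is the minimal polynomial of the recurrence, so every characteristic root occurs with nonzero coefficient; reducing, one gets $a_n \equiv \lambda\beta^n + \mathrm{Tr}_{\mathbb{F}_{p^{d-1}}/\mathbb{F}_p}(\mu\gamma^n) \Mod{p}$ with $\lambda \in \mathbb{F}_p^*$ and $\mu \in \mathbb{F}_{p^{d-1}}^*$ for all but finitely many $p$, so that $p$ is a prime divisor exactly when this expression vanishes for some $n$. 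The key structural step is to eliminate $\beta$ using the norm relation $\beta\,\mathrm{N}_{\mathbb{F}_{p^{d-1}}/\mathbb{F}_p}(\gamma) = (-1)^d P(0)$: with $\sigma = 1+p+\dots+p^{d-2}$, so that $\gamma^\sigma = \mathrm{N}(\gamma)\in\mathbb{F}_p$, multiplying $a_n$ by $\gamma^{\sigma n}$ and pulling the $\mathbb{F}_p$-scalar into the trace shows that $a_n\equiv 0$ is equivalent to
\[
\mathrm{Tr}_{\mathbb{F}_{p^{d-1}}/\mathbb{F}_p}(\mu\delta^n) = -\lambda\rho^n, \qquad \delta := \gamma^{1+\sigma},\quad \rho := (-1)^d P(0) \in \mathbb{F}_p^*.
\]
This is exactly what the hypotheses on $P(0)$ are calibrated for: when $|P(0)|=1$ one has $\rho=\pm1$ and the right-hand side is essentially constant, while when $|P(0)|>1$ the element $\rho$ is a genuine unit and one must track its order.

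I would then detect a solution $n$ by counting over one period. Grouping the admissible $n$ by the value $r=\rho^n\in\mathbb{F}_p^*$ (finitely many, all nonzero), the problem becomes: does the affine hyperplane $\{W : \mathrm{Tr}(W)=-\lambda r\}$ meet the coset $\mu\langle\delta'\rangle$, where $\delta'$ is the relevant power of $\delta$? Detecting the coset with multiplicative characters and opening the hyperplane condition with additive characters turns each term into a Gauss sum over $\mathbb{F}_{p^{d-1}}$; the decisive point is that, because the target $-\lambda r$ is nonzero, the innermost sum over $\mathbb{F}_p$ collapses to a Gauss sum of size at most $\sqrt p$ rather than to its trivial bound, so that the total error is $O(p^{(d-2)/2})$ instead of the naive $O(p^{(d-1)/2})$. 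Against the main term $\asymp\ord(\delta)/p$, the count is positive once $\ord(\delta)$ exceeds roughly $\gcd(\ord(\delta),\ord(\rho))\cdot p^{d/2}$. This nonzero-target saving is essential and is precisely what the norm reduction buys: the direct two-variable count in $\beta$ and $\gamma$ is hopeless, since $\lcm(\ord(\beta),\ord(\gamma))$ divides $p^{d-1}-1$ and can never reach the $p^{1+d/2}$ needed to beat a $\sqrt{p^{d}}$ error — already fatal at $d=3$.

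It remains to verify, for almost all primes of the given splitting type, the order conditions $\ord(\delta)\ge p^{(d-1)(1-\eps)}$ and $\gcd(\ord(\delta),\ord(\rho))\le p^{\eps}$ that make the count positive, and this is where GRH and multiplicative independence enter. Independence of the roots (of $d-1$ of them when $|P(0)|=1$, where the product is forced to be $\pm1$, and of all $d$ when $|P(0)|>1$, where $\rho$ involves the full product) guarantees that the relevant Kummer extensions $\mathbb{Q}(\zeta_k,\delta^{1/k})$ and $\mathbb{Q}(\zeta_k,(\delta^a\rho^b)^{1/k})$ have maximal degree. A prime $\ell$ dividing the index $[\mathbb{F}_{p^{d-1}}^*:\langle\delta\rangle]$, or contributing to the gcd, forces $p$ into a Chebotarev class of the corresponding field; summing the GRH-effective Chebotarev estimates over the moduli $k$ in Hooley's style (GRH for small $k$, Brun--Titchmarsh or trivial bounds for large $k$) shows that the index, and the gcd, exceed $p^{\eps}$ only for a density-zero set of primes. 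Combined with the previous step, almost every prime of the prescribed splitting type is a prime divisor.

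Finally, Chebotarev's theorem gives the density of primes of factorization type $(1,d-1)$ as the proportion of elements of that cycle type in the Galois group — which is $1/(d-1)$ for $S_d$, and positive whenever such an element exists — and discarding a density-zero subset leaves the lower density of prime divisors at least this proportion, yielding both statements. The main obstacle throughout is the zero-detection step: proving that the vanishing locus is actually hit requires the norm reduction to produce a nonzero target, the resulting square-root saving in the character sum, and near-maximal multiplicative orders from GRH, with $d=3$ the tightest case where all three inputs are pushed to their limit.
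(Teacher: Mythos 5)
Your opening reduction is sound and closely parallels the paper's: multiplying by $\gamma^{\sigma n}$ to convert $a_n\equiv 0$ into $\mathrm{Tr}_{\mathbb{F}_{p^{d-1}}/\mathbb{F}_p}(\mu\delta^n)=-\lambda\rho^n$ is essentially the same norm trick, and your character-sum count is correct as far as it goes: with a nonzero target the inner sum over $\mathbb{F}_p$ does collapse to a Gauss sum, the error is $O(p^{(d-2)/2})$ against a main term of size $\ord(\delta')/p$, and a solution exists once $\ord(\delta)>\gcd(\ord(\delta),\ord(\rho))\,p^{d/2}$. This is in substance Niederreiter's equidistribution theorem, which the paper cites. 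The genuine gap is the arithmetic input you then invoke: a bound of the shape $\ord(\delta)\gg p^{d/2+\eps}$ for almost all $p$ of the given splitting type, where $\delta=\gamma\,\mathrm{N}(\gamma)$ has degree $d-1\ge 2$ over $\mathbb{F}_p$. This does \emph{not} follow from GRH by a Hooley-style summation. Hooley's method controls orders of elements of $\mathbb{F}_p$ itself: there the condition $q\mid[\mathbb{F}_p^*:\langle a\rangle]$ is a complete-splitting condition in $\mathbb{Q}(\zeta_q,a^{1/q})$ and the relevant moduli stay below $\sqrt p$. For an element of degree $d-1$ over $\mathbb{F}_p$ the index can be divisible by primes up to $p^{(d-1)/2}$, the Frobenius condition in the Kummer extension is no longer ``split completely,'' and the tail of the sum over moduli cannot be controlled. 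This is exactly the obstruction described in Section \ref{sec:discussion}: only the degree-$2$ analogue of Lemma \ref{lem:artin} is known (Roskam, with substantial extra work), and all higher-degree analogues are open even under GRH. Your argument needs this already at $d=3$ and needs the open cases for every $d\ge 4$, so the proof does not close.

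The paper is structured precisely to sidestep this. After the same norm reduction it substitutes $n=k(p^{d-1}-1)/(p-1)+r$ so that the only element whose multiplicative order must be shown to be nearly maximal is $\alpha_1^{d+1}/N$, whose reduction lies in $\mathbb{F}_p$ (degree one), where Lemma \ref{lem:artinIdeal} applies. The price is the loss of equidistribution: one must instead show that the residual sequence $(b_r)$ hits some $h$th power modulo almost every $p$, which the paper achieves through the Kummer-theoretic linear disjointness argument of Lemmas \ref{lem:garrett}--\ref{lem:linDis} together with Zannier's theorem to exclude the norm sequence $(c_n)$ being a perfect power along an arithmetic progression. To salvage your route you would have to replace the order hypothesis on $\delta$ by the degree-one statement and then supply a substitute for equidistribution --- which is exactly the machinery occupying Sections \ref{sec:galois}--\ref{sec:conclude} of the paper.
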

(There are non-trivial examples of polynomials $P$ which do not satisfy the last condition \cite{drmota-skalba}.)

The proof also adapts to reducible characteristic polynomials in certain special cases, but as such cases are rare, we do not discuss them in detail.

The GRH used in the proofs is that the non-trivial zeros of the Dedekind zeta-function of any number field lie on the line $\text{Re}(s) = 1/2$. (For details, see Lemma \ref{lem:artinIdeal} below and \cite[Theorem 3.1]{lenstra}.)

We first provide a proof sketch, after which we give a detailed argument. We conclude by discussing challenges arising in the study of prime divisors of linear recurrences.

\section{Overview of the method}
\label{sec:overview}

For concreteness we consider the sequence $a_n$ defined by
$$a_n = 5^n + (3 + \sqrt{2})^n + (3 - \sqrt{2})^n, n = 1, 2, \ldots$$
The characteristic polynomial $(x-5)(x - (3 + \sqrt{2}))(x - (3 - \sqrt{2}))$ is reducible and thus not of the form of Theorem \ref{thm:main}, but we only use this example to demonstrate the idea. (The proof of Theorem \ref{thm:main} can be adapted to this sequence, though.)

In the case when $2$ is a quadratic residue modulo $p$ the period of the sequence $(a_n)$ modulo $p$ divides $p-1$. We are unable to say anything about whether such primes are prime divisors of $(a_n)$ or not. 

The case when $2$ is a quadratic nonresidue modulo $p$, however, turns out to be accessible. Write $n = (p+1)k + r, k, r \in \mathbb{Z}$. We may view the numbers $3 \pm \sqrt{2}$ as elements of $\mathbb{F}_{p^2}$, and by norms $(3 \pm \sqrt{2})^{p+1} = 7$. Hence
\begin{align*}
a_{(p+1)k + r} \equiv 5^{2k+r} + 7^k\left((3 + \sqrt{2})^r + (3 - \sqrt{2})^r\right) \pmod{p}.
\end{align*}
The equation $a_{(p+1)k + r} \equiv 0 \pmod{p}$ may thus be written as
\begin{align}
\label{equ:1}
\left(\frac{5^2}{7}\right)^k \equiv -\left(\frac{3 + \sqrt{2}}{5}\right)^r - \left(\frac{3 - \sqrt{2}}{5}\right)^r \pmod{p}.
\end{align}

Artin's primitive root conjecture states that a given rational number $a$ is a primitive root modulo $p$ for infinitely many primes $p$ as long as $a$ is not $-1$ or a square. Under GRH one can prove this in a quantative form: the set of such primes has a positive density (as long as its infinite) \cite{hooley}. This density is often quite large. For example, for $a = 2$ the density is roughly $37\%$. 

It turns out that the order of a rational number $a$ modulo primes is almost always almost maximal assuming $a \not\in \{-1, 0, 1\}$ (under GRH). More precisely, the density of primes $p$ with $\ord_p(a) \ge (p-1)/C$ goes to $1$ as $C \to \infty$. (See \cite[Section 5]{wagstaff} or Lemma \ref{lem:artinIdeal} below.)

Note also that if $\ord_p(a) = (p-1)/h$, then the function $x \to a^x \pmod{p}, x \in \mathbb{Z}$ attains all non-zero $h$th powers modulo $p$ as its values.

In this light, to prove that \eqref{equ:1} is solvable for almost any prime $p$ it suffices to show that the equation
\begin{align}
\label{equ:2}
x^h \equiv -\left(\frac{3 + \sqrt{2}}{5}\right)^r - \left(\frac{3 - \sqrt{2}}{5}\right)^r \pmod{p}
\end{align}
has a solution $(x, r)$ with $x \neq 0$ for almost any prime $p$.

Note the right hand side is a linear recurrence. Denote it by $b_r$. We aim to prove that the sequence $b_1, b_2, \ldots$ almost always attains a $h$th power as its value modulo $p$.

By using results from Galois theory and the Chebotarev density theorem, it is not very hard to show that this is true, for example, if there is an infinite subsequence of $(b_r)$ whose elements are distinct primes.

Of course, we cannot guarantee that a linear recurrence has infinitely many prime values. However, there are only a very few cases where such an idea does not work. To name one, if $b_r$ is always two times a square, then if $2$ is a quadratic nonresidue modulo $p$ (which happens for a positive density of primes, and in this example case for all $p$ we consider), the sequence $b_r$ may avoid all squares modulo $p$.

In general, the only obstructions are that the values of the linear recurrence are almost perfect powers. By applying Zannier's result on Pisot's $d$th root conjecture \cite{zannier} we reduce to considering whether a linearly recursive sequence arising in the proof is the power of another recurrence. From here only elementary observations are needed.

As we already mentioned, the sequence $a_n$ considered here is not of the form Theorem \ref{thm:main}, and the general case is more complicated. There are two notable differences.

To perform the ``norm-trick'' and to arrive to an equation of the form \eqref{equ:1} we need to control the norms of the roots of the characteristic polynomial in finite fields. To do so, in the situation of Theorem \ref{thm:main} we consider those primes $p$ for which $P$ factorizes as the product of irreducibles of degrees $1$ and $d-1$ modulo $p$. 

From here we are able to reduce to an equation similar to \eqref{equ:2}, though this time the right hand side is not necessarily a linear recurrence of integers but of algebraic numbers. By taking norms we reduce to the integer case, the same idea can be implemented and we are able to show that the reduction of at least one term of the sequence of algebraic numbers to $\mathbb{F}_p$ is a $h$th power almost always.

In Section \ref{sec:order} we state the GRH-conditonal result mentioned earlier. In Section \ref{sec:red} we reduce the problem to a polynomial equation in a similar manner as above. We note that the equation is solvable for a set of primes of density $1$ if certain field extensions are linearly disjoint. We present the tool to handle such questions in Section \ref{sec:galois}. To apply the tool we have to check that our sequence is not roughly a perfect power, as done in Section \ref{sec:perfect}. We wrap up the proof in Section \ref{sec:conclude}.

\section{Orders of reductions of algebraic numbers}
\label{sec:order}

The following lemma is used when transforming our problem into a polynomial equation. This lemma is the only part of the proof that relies on GRH.

\begin{lemma}
\label{lem:artin}
Assume GRH. Let $P \in \mathbb{Z}[x]$ be non-constant and irreducible. Assume $P$ is not a cyclotomic polynomial, i.e. no root of $P$ is a root of unity, and that $P$ is not the identity. Let $S$ denote the set of primes such that the equation $P(x) \equiv 0 \pmod{p}$ has at least one solution $f(p)$. For $C > 0$ let $S_C$ denote the set of primes $p \in S$ for which the order of $f(p)$ modulo $p$ is at least $(p-1)/C$. The (lower) density of $S_C$ with respect to $S$ approaches $1$ as $C \to \infty$.
\end{lemma}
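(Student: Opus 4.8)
The plan is to pass to the number field $K = \mathbb{Q}(\alpha)$, where $\alpha$ is a fixed root of $P$; by the hypotheses $\alpha \ne 0$ and $\alpha$ is not a root of unity. For $p \in S$ the solution $f(p)$ is the image of $\alpha$ under reduction at some degree-one prime $\mathfrak{p}$ of $K$ lying over $p$, so that $\ord_p(f(p)) = \ord_{\mathfrak{p}}(\alpha)$ inside the cyclic group $(\mathcal{O}_K/\mathfrak{p})^{\ast} \cong \mathbb{F}_p^{\ast}$. Introducing the \emph{index} $i_{\mathfrak{p}} := (p-1)/\ord_{\mathfrak{p}}(\alpha)$, membership in $S_C$ is exactly the condition $i_{\mathfrak{p}} \le C$. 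It therefore suffices to prove that the density of the set of primes $p$ admitting a degree-one prime $\mathfrak{p} \mid p$ with $i_{\mathfrak{p}} > C$ tends to $0$ as $C \to \infty$; since $S$ itself has a positive density by Chebotarev, I may measure everything against all primes and divide by the density of $S$ at the end. Concretely, the goal is $\eta(C) \to 0$ as $C\to\infty$, where
\[
\eta(C) := \limsup_{x\to\infty}\frac{1}{\pi(x)}\#\{p \le x : i_{\mathfrak{p}} > C \text{ for some degree-one } \mathfrak{p}\mid p\}.
\]

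The arithmetic input is the standard translation of index divisibility into a splitting condition. For a positive integer $k$ one has $k \mid i_{\mathfrak{p}}$ exactly when $k \mid p-1$ and $\alpha$ is a $k$-th power in $\mathbb{F}_p^{\ast}$, and this is governed by the splitting of $\mathfrak{p}$ in the Kummer extension $M_k := K(\zeta_k, \alpha^{1/k})$. Since $\alpha$ is not a root of unity, Kummer theory gives $[M_k : \mathbb{Q}] \gg k\varphi(k)$ with an implied constant depending only on $K$ and $\alpha$, so the density of primes $p$ admitting a degree-one $\mathfrak{p}$ with $k \mid i_{\mathfrak{p}}$ is $\ll 1/(k\varphi(k)) \ll_{\varepsilon} k^{-2+\varepsilon}$. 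Assuming the nontrivial zeros of the Dedekind zeta function of $M_k$ lie on $\mathrm{Re}(s)=1/2$, the effective Chebotarev density theorem computes this count up to an error $O\!\left(\sqrt{x}\,\log\!\big(d_{M_k}\, x^{[M_k:\mathbb{Q}]}\big)\right)$, and the bounds $[M_k:\mathbb{Q}] \ll k^2$ and $\log d_{M_k} \ll k^2\log(kH)$ (with $H$ a height of $\alpha$) turn this into an error of size $O(\sqrt{x}\,k^2\log(kx))$, which is serviceable as long as $k$ does not exceed a small fixed power of $x$.

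I then bound the primes with $i_{\mathfrak{p}} > C$ by separating three ranges, with a threshold $\xi_1$ equal to a small fixed power of $x$ and $\xi_2 \approx \sqrt{x}$. \emph{Small range.} If every prime factor of $i_{\mathfrak{p}}$ is at most $\xi_1$, then $i_{\mathfrak{p}} > C$ forces a divisor $m$ of $i_{\mathfrak{p}}$ with $C < m \le C\xi_1$ (build up the divisor by multiplying in the prime factors one at a time, each step enlarging it by a factor at most $\xi_1$). Applying the effective Chebotarev bound to each such $m$, the error terms sum to $o(\pi(x))$ by the choice of $\xi_1$ and fixed $C$, while the main terms contribute at most $\ll_{\varepsilon}\big(\sum_{m>C} m^{-2+\varepsilon}\big)\pi(x) \ll_{\varepsilon} C^{-1+\varepsilon}\pi(x)$; this is the only range carrying the dependence on $C$. \emph{Large range.} If $\ord_{\mathfrak{p}}(\alpha) < \sqrt{x}/\log x$, then $\mathfrak{p}$ divides $\alpha^{m}-1$ for some $m < \sqrt{x}/\log x$; here the non-torsion hypothesis is essential, since each $\alpha^{m}-1$ is a \emph{nonzero} algebraic integer with $\log|N_{K/\mathbb{Q}}(\alpha^{m}-1)| \ll m$ and hence at most $O(m)$ prime ideal divisors, so the number of such $\mathfrak{p}$ is $\ll \sum_{m<\sqrt{x}/\log x} m \ll x/\log^{2}x = o(\pi(x))$.

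This leaves the \emph{middle range}, where $i_{\mathfrak{p}}$ has a prime factor $\ell \in (\xi_1,\xi_2]$ but the order is not small enough for the previous step; I expect this to be the main obstacle, exactly as in Hooley's conditional proof of Artin's primitive root conjecture. A union bound reduces the task to showing $\sum_{\xi_1 < \ell \le \xi_2}\#\{p \le x : \ell \mid i_{\mathfrak{p}}\} = o(\pi(x))$, but for $\ell$ past the Chebotarev threshold the error terms are too large to use the fields $M_\ell$ directly, whereas discarding the power-residue condition and counting only $p \equiv 1 \pmod{\ell}$ via Brun--Titchmarsh loses a factor of $\ell$ and yields a bound of size $\asymp \pi(x)$ rather than $o(\pi(x))$. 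The resolution is to retain the $\ell$-th power condition and estimate these counts by a Brun--Titchmarsh/large-sieve hybrid in the spirit of Hooley, which recovers the missing saving and renders the middle range negligible. Granting this, the three contributions combine to give $\eta(C) \ll_{\varepsilon} C^{-1+\varepsilon}$, whence $\eta(C) \to 0$; dividing by the positive density of $S$ shows that the density of $S_C$ with respect to $S$ tends to $1$ as $C\to\infty$.
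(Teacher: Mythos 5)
Your route is genuinely different from the paper's. The paper does not rerun Hooley's sieve at all: it restates the lemma in terms of prime ideals (Lemma \ref{lem:artinIdeal}) and then quotes Lenstra's GRH-conditional results, which give for each $k$ the density of the set $T_k'$ of degree-one primes of index exactly $k$ as $\sum_t \mu(t)/[K(\zeta_{kt},\alpha^{1/kt}):K]$; a M\"obius rearrangement shows these densities sum to $1$ over $k$, so the tail beyond $C$ vanishes as $C\to\infty$. What you propose is essentially to reprove the input that Lenstra's machinery packages. That is a legitimate strategy, but as written your version has a genuine gap.

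The gap is the middle range, which you explicitly leave open (``Granting this''), and the fix you gesture at --- a ``Brun--Titchmarsh/large-sieve hybrid'' retaining the power-residue condition --- is not what the classical argument does and is not obviously available. The difficulty is largely an artifact of your Chebotarev error term: for the splitting-completely condition one should apply the GRH prime ideal theorem to $M_\ell$ and divide by $[M_\ell:\mathbb{Q}]$ (each totally split $p$ contributes $[M_\ell:\mathbb{Q}]$ degree-one prime ideals), which yields an error $O(\sqrt{x}\log(\ell x))$ per field, not $O(\sqrt{x}\,\ell^2\log(\ell x))$. With the correct bound the single-$\ell$ estimates cover all $\ell \le \sqrt{x}/\log^2 x$, since the $O(\sqrt{x}\log x)$ errors summed over the $O(\sqrt{x}/\log^3 x)$ primes in that range give $o(\pi(x))$; the remaining window $\sqrt{x}/\log^2 x < \ell \le \sqrt{x}\log x$ is then so narrow that discarding the power-residue condition and applying plain Brun--Titchmarsh costs only $\ll \pi(x)\sum 1/\ell \ll \pi(x)\log\log x/\log x = o(\pi(x))$, and $\ell > \sqrt{x}\log x$ forces the order below $\sqrt{x}/\log x$, which is your large range. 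So the argument does close by the standard three-range Hooley decomposition once the thresholds are chosen correctly and no hybrid sieve is needed --- but as submitted, the decisive analytic step is asserted rather than proved, and the asserted mechanism is the wrong one.
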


Note that we do not say anything about which root $f(p)$ of $P$ modulo $p$ we choose if there are several of them. The result is true no matter how the choices are made.

This result is equivalent to the following algebraic number theoretic formulation. 

\begin{lemma}
\label{lem:artinIdeal}
Assume GRH. Let $\alpha$ be a non-zero algebraic number which is not a root of unity. Let $K = \mathbb{Q}(\alpha)$ and let $O_K$ denote the ring of integers of $K$. Let $T$ denote the set of prime ideals of $O_K$ whose norm is a prime. For $C > 0$ let $T_C$ denote the set of primes $\mathfrak{p}$ of $T$ such that the reduction of $\alpha$ in $O_K/\mathfrak{p} \cong \mathbb{F}_p$ has order at least $(p-1)/C$, where $p$ is the norm of $\mathfrak{p}$. The (lower) density of $T_C$ with respect to $T$ approaches $1$ as $C \to \infty$ (where ideals are ordered by norm).
\end{lemma}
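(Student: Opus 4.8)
The plan is to recast the order condition as a statement about the index
\[
h_{\mathfrak p} := [\,\mathbb{F}_p^\times : \langle \bar\alpha\rangle\,] = \frac{p-1}{\ord(\bar\alpha)},
\]
where $\bar\alpha$ is the reduction of $\alpha$ in $O_K/\mathfrak p \cong \mathbb{F}_p$ and $p = N\mathfrak p$. Since $\ord(\bar\alpha) \ge (p-1)/C$ is equivalent to $h_{\mathfrak p} \le C$, the claim becomes: the density in $T$ of $\{\mathfrak p \in T : h_{\mathfrak p} > C\}$ tends to $0$ as $C \to \infty$. To make the normalization precise I would invoke Landau's prime ideal theorem, which gives $\#\{\mathfrak p \in T : N\mathfrak p \le x\} \sim x/\log x$ (higher-degree primes contribute only $O(\sqrt{x})$), so that densities with respect to $T$ are the expected quantities.

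First, for a rational prime $q$ and an integer $j \ge 1$, I would identify $q^j \mid h_{\mathfrak p}$ with a splitting condition: for an unramified degree-one prime $\mathfrak p$ of norm $p$, one has $q^j \mid h_{\mathfrak p}$ exactly when $q^j \mid p-1$ and $\bar\alpha$ is a $q^j$-th power in $\mathbb{F}_p^\times$, which amounts to $\mathfrak p$ splitting completely in the Kummer extension $M_{q^j} := K(\zeta_{q^j}, \alpha^{1/q^j})$. Applying the Chebotarev density theorem to the Galois closure of $M_{q^j}/\mathbb{Q}$, the density within $T$ of $\{\mathfrak p \in T : q^j \mid h_{\mathfrak p}\}$ is $O(q^{-2j})$; the essential point is that $[M_{q^j} : K(\zeta_{q^j})] = q^j$, so $[M_{q^j}:\mathbb{Q}] \gg q^{2j}$. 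This degree equality holds for all $q$ outside a finite set because $\alpha$ is not a root of unity: if $\alpha$ were a $q$-th power in $K(\zeta_q)$, a height argument (Kronecker's theorem applied to the $q$-th root, whose height is $h(\alpha)/q \to 0$ in bounded degree) would force $\alpha$ to be a root of unity. The finitely many exceptional $q$ only worsen absolute constants.

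Summing these estimates, I would bound the density-average of $\log h_{\mathfrak p}$. Writing $\log h_{\mathfrak p} = \sum_{q} v_q(h_{\mathfrak p}) \log q$ and using that the densities of $\{q^j \mid h_{\mathfrak p}\}$ sum over $j \ge 1$ to $O(q^{-2})$, one obtains formally
\[
E[\log h_{\mathfrak p}] \ll \sum_{q} \frac{\log q}{q^2} < \infty,
\]
where $E$ denotes averaging with respect to the natural density on $T$. Markov's inequality then gives that the density of $\{h_{\mathfrak p} > C\} = \{\log h_{\mathfrak p} > \log C\}$ is at most $E[\log h_{\mathfrak p}]/\log C \to 0$ as $C \to \infty$, which is exactly the assertion.

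The main obstacle, and the only place GRH enters, is making this interchange of the limiting density with the infinite sum over $q$ rigorous and uniform — equivalently, counting, for large $x$, the primes $\mathfrak p \in T$ with $N\mathfrak p \le x$ and $q \mid h_{\mathfrak p}$ uniformly as $q$ ranges up to a power of $x$. I would follow Hooley's dissection, as carried out in the rational case in \cite[Section 5]{wagstaff}: for small $q$, say $q \le \tfrac16 \log x$, apply the GRH-effective form of the Chebotarev density theorem to $M_q/\mathbb{Q}$, whose error term $O\bigl(x^{1/2}\log(\mathrm{disc}(M_q)\,x)\bigr)$ is summable over this range since $\mathrm{disc}(M_q)$ grows only polynomially in $q$; for intermediate and large $q$, use elementary bounds — the condition $q \mid p-1$ forces $p \equiv 1 \Mod{q}$ (Brun–Titchmarsh), and for $q$ beyond $\sqrt{x}$ one exploits that $\ord(\bar\alpha)$ is then small, so $p$ divides $\alpha^m - 1$ for some small $m$, of which there are few. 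GRH is precisely what extends the effective Chebotarev range up to $q \asymp \sqrt{x}$ and thereby closes the gap between the small- and large-$q$ regimes. Adapting the dissection from $\mathbb{Q}$ to the number field $K$ — tracking the entanglement of $K$ with the cyclotomic and Kummer towers, and using Landau's theorem for the normalization — is routine bookkeeping that introduces no new difficulty.
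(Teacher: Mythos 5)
Your proposal is correct in outline but follows a genuinely different route from the paper. The paper does not redo any analytic work: it partitions $T$ into the sets $T_k'$ of primes where the index $[\mathbb{F}_p^\times:\langle\bar\alpha\rangle]$ equals exactly $k$, quotes Lenstra's GRH-conditional theorem that each $T_k'$ has a density equal to the inclusion--exclusion series $\sum_t \mu(t)/[K(\zeta_{kt},\alpha^{1/kt}):K]$, and then observes via a M\"obius rearrangement that these densities sum to $1$; since the $T_k'$ are disjoint and exhaust $T$, the tail $\{ \text{index} > C\}$ must have density tending to $0$. You instead re-derive the needed tail bound from scratch in the style of Hooley: identify $q^j \mid h_{\mathfrak p}$ with complete splitting in $K(\zeta_{q^j},\alpha^{1/q^j})$, bound the first moment of $\log h_{\mathfrak p}$ by $\sum_{q,j} q^{-2j}\log q < \infty$, and conclude by Markov, with the uniformity over $q$ supplied by the GRH-effective Chebotarev theorem plus Brun--Titchmarsh and the divisor-counting argument for large $q$. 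Both rest on the same GRH input (effective Chebotarev for the Kummer towers); the paper's version is a two-line deduction from a black box that yields exact densities, while yours is self-contained and only needs upper bounds, at the cost of actually carrying out the dissection that Lenstra's theorem encapsulates.

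Two points in your write-up need repair, though neither is fatal. First, your justification that $[K(\zeta_{q^j},\alpha^{1/q^j}):K(\zeta_{q^j})]=q^j$ for all but finitely many $q$ is not right as stated: a $q$-th root $\beta$ of $\alpha$ lying in $K(\zeta_q)$ has degree up to $[K:\mathbb{Q}]\,\phi(q)$ over $\mathbb{Q}$, which is unbounded, so Kronecker's theorem does not apply to the sequence of heights $h(\alpha)/q \to 0$ (one would need a Lehmer-type bound). The fact itself is standard --- e.g., for non-unit $\alpha$ compare valuations at a prime where $v_{\mathfrak P}(\alpha)\neq 0$, and in general use that $[K(\zeta_n,\alpha^{1/n}):K]\gg n\phi(n)$, which is part of Lenstra's setup --- but it should be cited or proved correctly. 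Second, your threshold ``$q \le \tfrac16\log x$'' for the GRH-effective Chebotarev range is the unconditional (first Hooley) range; under GRH the Chebotarev estimate is applied for all $q$ up to roughly $x^{1/2}/\log^2 x$, which is what closes the gap to the elementary large-$q$ regimes. You say as much in your closing sentence, so this is a misstatement rather than a missing idea.
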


\begin{proof} (Cf. \cite[Section 5]{wagstaff}.) Note that almost all ideals of $O_K$ belong to $T$. For $k \in \mathbb{Z}_+$ let $T_k' = T_k \setminus T_{k-1}$. The results of Lenstra \cite{lenstra} imply that $T_k'$ has a density for all $k$. This density is given by
$$d(T_k') = \sum_{t = 1}^{\infty} \frac{\mu(t)}{[K(\zeta_{kt}, \alpha^{1/kt}) : K]},$$
where the sum is absolutely convergent. Let $f(n) = 1/[K(\zeta_n, \alpha^{1/n}) : K]$. Now rearranging gives
$$\sum_{k = 1}^{\infty} \sum_{t = 1}^{\infty} \mu(t)f(kt) = \sum_{K = 1}^{\infty} f(K) \sum_{d \mid K}  \mu(d) = f(1) = 1.$$
\end{proof}

\section{Reduction to a polynomial equation}
\label{sec:red}

We consider the setup of Theorem \ref{thm:main}. The proof also works in the situation of Theorem \ref{thm:general}.

Let $P$ and $(a_n)$ be as in Theorem \ref{thm:main}. Assume $(a_n)$ is never zero. Let $S$ denote the set of primes $p$ such that $P$ factorizes as the product of polynomials of degree $1$ and $d-1$ modulo $p$. By the assumption and the Chebotarev density theorem the density of $S$ is $1/(d-1)$. We prove that the density of primes of $S$ which are prime divisors of $(a_n)$ is one.

We write
$$a_n = \gamma_1\alpha_1^n + \ldots + \gamma_d\alpha_d^n,$$
where $\alpha_1, \ldots , \alpha_n$ are the roots of $P$ and $\gamma_1, \ldots , \gamma_d$ are constants, and (for further purposes) we define
$$b_n = b_{h, n} := \alpha_1^{hn}\left(-\frac{a_n}{\gamma_1\alpha_1^n} + 1\right)$$
and
$$c_n = c_{h, n} := N_{F/\mathbb{Q}}(b_n) = (-1)^dN^{hn}\prod_{i = 1}^d \sum_{j \neq i} \frac{\gamma_j\alpha_j^n}{\gamma_i\alpha_i^n},$$
where $F$ is the splitting field of $P$, $N := N_{F/\mathbb{Q}}(\alpha_1)$ and $h \in \mathbb{Z}_+$ is a parameter fixed later. For an algebraic number field $K$ we let $O_K$ denote the ring of integers of $K$.

Note that $\gamma_i \in \mathbb{Q}(\alpha_i) \setminus \{0\}$ (see e.g. \cite[Section 2]{roskamPD}). 

For each (unramified) prime $p \in S$ there exists a homomorphism $\varphi : O_F \to \mathbb{F}_{p^{d-1}}$ mapping one of the roots $\alpha_i$, say $\alpha_1$, to an element of $\mathbb{F}_p$, and the other roots to elements of $\mathbb{F}_{p^{d-1}}$ whose degrees over $\mathbb{F}_p$ are $d-1$. Let $K = \mathbb{Q}(\alpha_1)$.

Note that for $n = k(p^{d-1} - 1)/(p-1) + r$ one has
$$\varphi(\alpha_1)^n = \varphi(\alpha_1)^{kd + r},$$
$$\varphi(\alpha_i)^n = N_{\mathbb{F}_{p^{d-1}}/\mathbb{F}_p}(\varphi(\alpha_i))^k\varphi(\alpha_i)^r, 2 \le i \le d$$
and
$$N_{\mathbb{F}_{p^{d-1}}/\mathbb{F}_p}(\alpha_i) = \varphi(\alpha_2) \cdots \varphi(\alpha_d) = \frac{N}{\varphi(\alpha_1)}, 2 \le i \le d$$ 
(Clearly $\varphi(\alpha_1) \neq 0$ for all but finitely many $p$.) Hence
\begin{align*}
\varphi(a_n) = \varphi(\gamma_1)\varphi(\alpha_1)^{kd + r} + \left(\frac{N}{\varphi(\alpha_1)}\right)^k\left(\varphi(\gamma_2)\varphi(\alpha_2)^r + \ldots + \varphi(\gamma_d)\varphi(\alpha_d)^r\right)
\end{align*}
so $\varphi(a_n) = 0$ if and only if
$$\varphi(\alpha_1^{d+1}/N)^k = -\frac{\varphi(\gamma_2)\varphi(\alpha_2)^r + \ldots + \varphi(\gamma_d)\varphi(\alpha_d)^r}{\varphi(\gamma_1)\varphi(\alpha_1)^r}.$$

We then note that $\alpha_1^{d+1}/N$ is not a root of unity: Otherwise the conjugates $\alpha_i^{d+1}/N$ are roots of unity as well. Hence the product
$$\prod_{1 \le i \le d} \alpha_i^{d+1}/N = N$$
is a root of unity. Hence $N = \pm 1$, and thus the numbers $\alpha_i$ are roots of unity. This contradicts the non-degeneracy of $P$.

We may hence apply Lemma \ref{lem:artin} to $\alpha_1^{d+1}/N$. It suffices to show that for any $h \in \mathbb{Z}_+$ the equation
$$x^h = b_r, x \ne q0$$
is solvable in $\mathbb{F}_p$ for almost all primes $p \equiv 1 \pmod{h}$. This may be reformulated as follows: almost all prime ideals of $O_{K(\zeta_h)}$ split in at least one of the fields $K_r := K(\zeta_h, b_r^{1/h}), r = 1, 2, \ldots$ 

\section{A linear disjointness result}
\label{sec:galois}

We use the following basic result from Galois theory \cite{garrett}.

\begin{lemma}
\label{lem:garrett}
Let $h$ be a positive integer, let $K$ be a number field containing a $h$th root of unity, and let $a_1, a_2, \ldots , a_k$ be a sequence of integers. Assume that for any integers $0 \le e_i < h$, not all zero, one has $a_1^{e_1/h} \cdots a_k^{e_k/h} \not\in K.$ Then
$$[K(a_1^{1/h}, \ldots , a_k^{1/h}) : K] = h^k.$$
\end{lemma}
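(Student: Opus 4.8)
The plan is to read this as the fundamental degree computation of Kummer theory, in which the hypothesis says exactly that the classes of $a_1,\dots,a_k$ are independent in $K^{*}/(K^{*})^{h}$. Here I take the $h$th root of unity in $K$ to be primitive, say $\zeta_h \in K$, so that any two $h$th roots of a given element of $K^{*}$ differ by an element of $\mu_h \subseteq K$. Because of this, for exponents $0 \le e_i < h$ the condition $a_1^{e_1/h}\cdots a_k^{e_k/h} \in K$ is independent of the chosen roots and is equivalent to $a_1^{e_1}\cdots a_k^{e_k} \in (K^{*})^{h}$. Thus the hypothesis says precisely that the homomorphism $(\mathbb{Z}/h\mathbb{Z})^{k} \to K^{*}/(K^{*})^{h}$ sending $(e_1,\dots,e_k)$ to $\prod_i \bar a_i^{\,e_i}$ is injective; equivalently, the subgroup $\Delta \le K^{*}/(K^{*})^{h}$ generated by the classes $\bar a_i$ has order exactly $h^{k}$.

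First I would set $L = K(a_1^{1/h},\dots,a_k^{1/h})$ and argue by induction on $k$, the case $k=0$ being trivial. Writing $K' = K(a_1^{1/h},\dots,a_{k-1}^{1/h})$, the inductive hypothesis (applied to the sub-collection $a_1,\dots,a_{k-1}$, for which the independence assumption still holds) gives $[K':K] = h^{k-1}$, and $K'/K$ is an abelian Kummer extension with $G := \Gal(K'/K) \cong (\mathbb{Z}/h\mathbb{Z})^{k-1}$. By the tower law it then suffices to prove $[K'(a_k^{1/h}):K'] = h$, i.e. that $a_k$ still has order $h$ in $(K')^{*}/((K')^{*})^{h}$. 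Since $a_k^{1/h}$ is a root of $x^{h}-a_k$, the degree $m := [K'(a_k^{1/h}):K']$ divides $h$, so I only need to rule out $m < h$.

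The crux is the lifting step. Suppose $0 < m < h$ and $a_k^{m} = \beta^{h}$ with $\beta \in K'$. For $\sigma \in G$ one has $\sigma(\beta)^{h} = \sigma(a_k^{m}) = a_k^{m} = \beta^{h}$ since $a_k \in K$, so $\sigma(\beta)/\beta \in \mu_h$ and $\sigma \mapsto \sigma(\beta)/\beta$ is a character $\chi \colon G \to \mu_h$. The surjectivity half of the Kummer pairing for $K'/K$ — which is the fact I would quote from \cite{garrett} — realizes every such character as $\sigma \mapsto \sigma(\gamma)/\gamma$ for some $\gamma = \prod_{i<k} a_i^{f_i/h}$. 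Then $\beta/\gamma$ is $G$-invariant, hence lies in $K$, giving
\[
a_k^{m} = \beta^{h} = \lambda^{h}\prod_{i<k} a_i^{f_i}, \qquad \lambda := \beta/\gamma \in K^{*}.
\]
Reducing each exponent modulo $h$ (which does not change the class in $K^{*}/(K^{*})^{h}$) produces integers $0 \le e_i < h$ with $e_k = m \ne 0$ and $\prod_{i\le k} a_i^{e_i} \in (K^{*})^{h}$, contradicting the independence hypothesis. Hence $m = h$, and combining with the tower law yields $[L:K] = h^{k-1}\cdot h = h^{k}$.

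I expect the only real obstacle to be this lifting step, namely the surjectivity of the Kummer pairing used to express the character $\chi$ through the generators $a_i$; everything else is the reformulation of the hypothesis in $K^{*}/(K^{*})^{h}$, the tower law, and exponent bookkeeping. One should also record at the outset that the $a_i$ are nonzero (so that $\bar a_i \in K^{*}/(K^{*})^{h}$ is defined), which is already forced by the hypothesis applied to single indices.
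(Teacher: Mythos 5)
The paper gives no proof of this lemma --- it is stated as a known result and delegated entirely to the citation \cite{garrett} --- so there is nothing in the text to compare against line by line. Your argument is the standard Kummer-theoretic proof (induction on $k$, reduction to showing $a_k$ retains order $h$ in $(K')^{*}/((K')^{*})^{h}$, and the lifting step via surjectivity of the Kummer pairing), it is correct as written including the necessary reading of ``an $h$th root of unity'' as \emph{primitive}, and it is presumably exactly the argument contained in the cited reference.
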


\begin{lemma}
\label{lem:coprime}
Let $(x_n)$ be a linearly recursive sequence of integers. Assume that there do not exist a number field $K$ and integers $A, B \ge 1$, $D \ge 2$ such that $x_{An+B}$ is always a $D$th power of an element of $K$. Then there exist a subsequence $x_{n_1}, x_{n_2}, \ldots$ of $(x_n)$ and primes $p_1, q_1, p_2, q_2, \ldots$ such that
\begin{itemize}
\item $\gcd(v_{p_i}(x_{n_i}), v_{q_i}(x_{n_i})) = 1$ for all $i$
\item the primes $p_1, q_1, p_2, q_2, \ldots$ are pairwise distinct
\end{itemize}
\end{lemma}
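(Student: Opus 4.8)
The plan is to prove the contrapositive in a greedy, constructive fashion. First I would reduce the statement to the following local assertion $(\star)$: for every finite set of primes $\mathcal{P}$ and every bound $N_0$, there exist an index $n > N_0$ and two distinct primes $p, q \notin \mathcal{P}$ with $\gcd(v_p(x_n), v_q(x_n)) = 1$. Granting $(\star)$, the desired subsequence together with the pairwise distinct primes $p_1, q_1, p_2, q_2, \dots$ is built one step at a time: having chosen $n_1 < \dots < n_i$ and the primes used so far, apply $(\star)$ with $\mathcal{P}$ equal to the (finite) set of primes already used and $N_0 = n_i$ to obtain $n_{i+1}$ and fresh primes $p_{i+1}, q_{i+1}$. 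We may discard the finitely many $n$ with $x_n = 0$, since the hypothesis forbids a whole arithmetic progression of zeros, so by Skolem--Mahler--Lech only finitely many terms vanish. It therefore suffices to establish $(\star)$.

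Suppose $(\star)$ fails. Then there is a finite set $\mathcal{P}$ such that, for all large $n$, no two distinct primes outside $\mathcal{P}$ have coprime valuations in $x_n$. Taking one of the two primes to be any prime not dividing $x_n$ shows first that \emph{no} prime $p \notin \mathcal{P}$ satisfies $v_p(x_n) = 1$; hence, writing $x_n = u_n w_n$ with $u_n$ the $\mathcal{P}$-part and $w_n$ the prime-to-$\mathcal{P}$ part, every prime valuation of $w_n$ is at least $2$, and any two such valuations share a common factor. The goal is to show that this forces $x_{An+B}$ to be an exact $D$th power in a number field for suitable $A, B \ge 1$ and $D \ge 2$, contradicting the hypothesis.

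To control the exponents I would use the valuation structure of the recurrence prime by prime. For a fixed prime $p$ of good reduction the set $\{n : p \mid x_n\}$ is a finite union of arithmetic progressions, since $x_n \bmod p$ is purely periodic, and on each such progression a lifting-the-exponent analysis shows that $v_p(x_n)$ equals a fixed base value $c_p$ for all but a sparse subset of the progression (the exceptions occurring when $p$ divides an auxiliary linear factor in $n$). Consequently, if some prime $p \notin \mathcal{P}$ had base value $c_p = 1$, there would be arbitrarily large $n$ with $v_p(x_n) = 1$, contradicting the previous paragraph; thus every prime $p \notin \mathcal{P}$ occurring in the sequence has $c_p \ge 2$, and at a generic index each dividing prime contributes exactly its base exponent. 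This reduces the problem to the combinatorics of the family of base exponents $\{c_p\}$, which are pairwise non-coprime whenever the corresponding primes co-occur in a common term.

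The main obstacle is the final upgrade: passing from ``all base exponents $\ge 2$ and pairwise non-coprime'' to a single fixed exponent $D \ge 2$ dividing all of them along an arithmetic progression of indices. Pairwise non-coprimality alone does not force a common factor, as witnessed by $6, 10, 15$, so ruling out such configurations is where the arithmetic of the recurrence, rather than pure combinatorics, must enter; this is precisely the point at which Zannier's resolution of Pisot's $d$th root conjecture is invoked, reducing ``$x_{An+B}$ is always a $D$th power'' to ``$x_{An+B}$ is the $D$th power of a linear recurrence'' and thereby contradicting the hypothesis. Once a common $D$ is available the $\mathcal{P}$-part is harmless: taking $K$ to contain $\zeta_D$, a $D$th root of $-1$, and a $D$th root of each prime of $\mathcal{P}$ turns $u_n$ into a $D$th power in $K$, so that $x_{An+B}$ itself becomes a $D$th power in $K$ on a suitable progression. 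I expect the verification that the base exponents admit a common factor, equivalently that pairwise-non-coprime-but-not-powerful configurations cannot persist along the sequence, to be the technical heart of the argument.
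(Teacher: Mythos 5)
Your overall architecture matches the paper's: the greedy reduction to a local statement (find one fresh index and one fresh coprime pair of primes avoiding any given finite set), discarding the finitely many zero terms, the periodicity of $v_p(x_n)$ along arithmetic progressions modulo $p^{t+1}$, and the observation that the finitely many ``bad'' primes of $\mathcal{P}$ can be absorbed into $K$ by adjoining $D$th roots of those primes and of $-1$. The genuine gap is exactly at the point you yourself flag as the technical heart. You correctly note that ``all valuations $\ge 2$ and pairwise non-coprime'' does not produce a common exponent $D \ge 2$ (the $6,10,15$ obstruction), but the tool you invoke to close this, Zannier's theorem on Pisot's $d$th root conjecture, cannot do it: its \emph{hypothesis} is that the recurrence is always a $D$th power for a fixed $D$, which is precisely what you are trying to establish, and its conclusion (that the recurrence is then the $D$th power of another recurrence) is not needed for this lemma at all, since the lemma's hypothesis already speaks of $D$th powers of field elements. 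In the paper, Zannier is used only in Section \ref{sec:perfect} to \emph{verify} the hypothesis of this lemma for the specific norm sequence $(c_n)$; it plays no role inside the lemma's proof. As written, your argument contains no mechanism for producing the common $D$.

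The paper sidesteps the global-common-divisor problem by never asking for one. It anchors on a single prime $p \notin S$ and an arithmetic progression along which $v_p(x_n)$ is a fixed $t \ge 1$ (if $t = 1$, any auxiliary prime already gives a coprime pair), and runs a trichotomy: either some $q \notin S$ satisfies $\gcd(v_q(x_n), t) = 1$ at some index of the progression (done); or every such valuation is divisible by this one number $t \ge 2$, in which case $x_{An+B}$ is a $t$th power up to $S$-units, contradicting the hypothesis; or some $q$ yields $1 < \gcd(v_q(x_n), t) < t$, and one descends to a strictly smaller parameter. Only divisibility by the single anchor $t$ is ever required, which is far weaker than a common divisor of all exponents; this anchoring-plus-descent is the idea missing from your write-up (and even there, the bookkeeping at the end of the descent --- ensuring the final pair has pairwise, not merely collectively, coprime valuations --- needs care, for the same $6,10,15$ reason). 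A secondary inaccuracy: there is no single ``base value $c_p$'' attained off a sparse set (lifting the exponent makes $v_p$ unbounded on the divisibility progression); what is true, and all the paper uses, is that $\{n : v_p(x_n) = t\}$ contains a full arithmetic progression, by periodicity of $(x_n)$ modulo $p^{t+1}$.
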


\begin{proof}
Note that the condition implies that $(x_n)$ has infinitely many prime divisors -- otherwise choose $A = 1, B = 0$, $D = 2$ and $K$ to be $\mathbb{Q}(\sqrt{p_1}, \sqrt{p_2}, \ldots , \sqrt{p_n})$, where $p_1, \ldots , p_n$ are the prime divisors of $(x_n)$. Note also that for all except finitely many primes $p$, say for all $p$ not belonging to $T$, the sequence $(x_n)$ is periodic modulo $p^k$ for all $k \in \mathbb{Z}_+$. 

We will inductively choose the indices $n_i$ and the primes $p_i, q_i$, additionally requiring that $p_i, q_i \not\in T$. Assume we have already choosen some $n_1, \ldots , n_k, p_1, \ldots , p_k, q_1, \ldots , q_k$ satisfying the conditions. We now pick $n_{k+1}, p_{k+1}, q_{k+1}$. Let $S = \{p_1, q_1, \ldots , p_k, q_k\} \cup T$.

Pick some prime divisor $p_{k+1} \not\in S$, let $v_{p_{k+1}}(x_{n_0}) = t > 0$ for some $n_0$ such that $x_{n_0} \neq 0$. By periodicity modulo $p_{k+1}^{t+1}$, there exists an arithmetic progression $An + B, n = 1, 2, \ldots$ such that $v_{p_{k+1}}(x_{An+B}) = t$ for all $n$

If there exist some prime $q_{k+1} \not\in S \cup \{p_{k+1}\}$ and $n \in \mathbb{Z}_+$ such that $\gcd(v_{q_{k+1}}(x_{An+B}), t) = 1$, we are done. Assume this is not the case.

If for any prime $q_{k+1} \not\in S$ and any $n$ we had $\gcd(v_{q_{k+1}}(x_{An+B}), t) = t$, then we could write 
$$|x_{An+B}| = f(n)^t\prod_{s \in S} s^{f_s(n)},$$
for some functions $f, f_s : \mathbb{Z}_+ \to \mathbb{Z}_{\ge 0}$. Then $x_{An+B}$ would always be a perfect $t$th power in a number field containing the $t$th roots of all primes of $s$ and a $2t$th root of unity, contrary to the assumption.

Hence there exist a prime $q_{k+1} \not\in S$ and $n \in \mathbb{Z}_+$ such that $t' := \gcd(v_{q_{k+1}}(x_{An+B}), t) < t$. Repeat the above argument with $q_{k+1}$ in place of $p_{k+1}$ and $t'$ in place of $t$. The value of $t$ decreases. It must happen that for some value of $p_{k+1}$ and $t$ we find a prime $q_{k+1}$ with $\gcd(v_{q_{k+1}}(x_{An+B}), t) = 1$.
\end{proof}

\begin{lemma}
\label{lem:linDis}
Let $h \in \mathbb{Z}_+$ and let $F$ be a number field containing a $h$th root of unity. Let $(x_n)$ be a linearly recursive sequence of integers satisfying the assumption of Lemma \ref{lem:coprime}. Then there exists a subsequence $x_{n_1}, x_{n_2}, \ldots$ of $(x_n)$ such that the extensions
$$F(x_{n_1}^{1/h})/F, F(x_{n_2}^{1/h})/F, \ldots$$
are linearly disjoint and of degree $h$.
\end{lemma}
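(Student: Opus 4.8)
The plan is to feed the subsequence produced by Lemma~\ref{lem:coprime} into the Kummer-theoretic criterion of Lemma~\ref{lem:garrett}. Recall that the extensions $F(x_{n_i}^{1/h})/F$ are linearly disjoint precisely when every finite subfamily is, i.e. when $[F(x_{n_{i_1}}^{1/h},\ldots,x_{n_{i_k}}^{1/h}):F]=h^k$ for all finite index sets. By Lemma~\ref{lem:garrett} this amounts to verifying, for all integers $0\le e_i<h$ not all zero, that $\prod_i x_{n_i}^{e_i/h}\notin F$. Since $F$ contains a primitive $h$th root of unity, this is equivalent to the purely multiplicative statement that $m:=\prod_i x_{n_i}^{e_i}$ is not an $h$th power in $F^{*}$: if $m=\delta^h$ with $\delta\in F$ then any fixed value $\prod_i x_{n_i}^{e_i/h}$ is an $h$th root of $\delta^h$, hence differs from $\delta$ by an $h$th root of unity and so lies in $F$, and the converse is immediate.

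First I would strengthen the output of Lemma~\ref{lem:coprime}. Its proof selects the primes $p_i,q_i$ among the infinitely many prime divisors of $(x_n)$ lying outside a fixed finite set $T$; I enlarge $T$ to also contain every prime ramifying in $F$, and at the $i$th step I insist that $p_i$ and $q_i$ exceed $\abs{x_{n_{i'}}}$ for all previously chosen indices $i'<i$. Each requirement discards only finitely many candidates, so the construction goes through unchanged. The upshot is a subsequence $x_{n_1},x_{n_2},\ldots$ (whose terms are nonzero by construction) together with pairwise distinct primes $p_i,q_i$, all unramified in $F$, such that $\gcd(v_{p_i}(x_{n_i}),v_{q_i}(x_{n_i}))=1$ and, crucially, $p_i\nmid x_{n_{i'}}$ and $q_i\nmid x_{n_{i'}}$ whenever $i'<i$.

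Now fix a finite nontrivial exponent vector and let $j$ be the largest index with $e_j\neq 0$, so that $0<e_j<h$. Writing $a=v_{p_j}(x_{n_j})$ and $b=v_{q_j}(x_{n_j})$, the isolation property kills every cross term: for $i<j$ the prime $p_j$ does not divide $x_{n_i}$, while for $i>j$ one has $e_i=0$ by maximality; hence $v_{p_j}(m)=e_j a$, and likewise $v_{q_j}(m)=e_j b$. Suppose $m=\delta^h$ with $\delta\in F$. For each prime $\mathfrak{p}\mid p_j$ of $F$ we get $h\,v_{\mathfrak{p}}(\delta)=v_{\mathfrak{p}}(m)=e(\mathfrak{p}/p_j)\,v_{p_j}(m)=e_j a$, where $e(\mathfrak{p}/p_j)=1$ because $p_j$ is unramified in $F$; thus $h\mid e_j a$, and the same argument at $q_j$ gives $h\mid e_j b$. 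Therefore $h$ divides $\gcd(e_j a,e_j b)=e_j\gcd(a,b)=e_j$, contradicting $0<e_j<h$. Hence $m\notin (F^{*})^h$, the hypothesis of Lemma~\ref{lem:garrett} holds for every finite subfamily, and the extensions are linearly disjoint and of degree $h$.

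The one genuine difficulty is the interaction of the certifying primes $p_i,q_i$ with the remaining terms of the sequence: without control, $p_j$ could divide some $x_{n_i}$ and spoil the clean valuation $v_{p_j}(m)=e_j a$. I expect this to be the crux, and it is exactly what the size condition $p_i,q_i>\abs{x_{n_{i'}}}$ for $i'<i$, combined with the choice of the maximal active index $j$, is designed to overcome; the unramifiedness condition plays the secondary but necessary role of removing the factor $e(\mathfrak{p}/p_j)$ from the final divisibility.
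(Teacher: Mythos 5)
Your argument is correct and follows the same route as the paper: feed the subsequence of Lemma \ref{lem:coprime} into Lemma \ref{lem:garrett}, and rule out a relation $\prod_i x_{n_i}^{e_i/h}\in F$ by a valuation argument at the certifying primes, using unramifiedness in $F$ to force $h\mid v_{p_j}(m)$ and $h\mid v_{q_j}(m)$ and then $h\mid e_j\gcd(v_{p_j}(x_{n_j}),v_{q_j}(x_{n_j}))=e_j$. (The paper packages the unramifiedness step as a constant $c$ such that primes $>c$ dividing an $h$th power of an element of $F$ do so to multiplicity divisible by $h$; your direct computation with $e(\mathfrak{p}/p_j)=1$ is the same thing.) The one point where you go beyond the paper is worth flagging: the paper jumps from ``the prime divisors $>c$ of $x_{n_1}^{e_1}\cdots x_{n_k}^{e_k}$ have multiplicity divisible by $h$'' to ``$h\mid e_i$ for all $i$,'' which requires $v_{p_j}\bigl(\prod_i x_{n_i}^{e_i}\bigr)=e_j\,v_{p_j}(x_{n_j})$; this can fail if $p_j$ divides some other chosen term $x_{n_i}$, and Lemma \ref{lem:coprime} as stated only makes the $p_i,q_i$ pairwise distinct, not disjoint from the other terms. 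Your strengthening of the construction (taking $p_i,q_i$ larger than all previously selected terms, then peeling off the maximal index $j$ with $e_j\neq 0$) supplies exactly the missing triangularity, and it costs nothing since only finitely many primes are excluded at each step (one should just fold the excluded primes into the finite set $S$ so the descent in Lemma \ref{lem:coprime} still yields its contradiction). So: same method, but your version closes a small gap that the paper's write-up leaves implicit.
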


\begin{proof}
Note first that there exists a constant $c$ (depending on $F$) such that if $x$ is an integer with $x^{1/h} \in F$, then all prime divisors of $x$ which are greater than $c$ have multiplicity divisible by $h$. (If $p$ is a prime with $h \nmid v_p(x)$, then $p$ is ramified in the $\mathbb{Q}(x^{1/h})$. If $\mathbb{Q}(x^{1/h}) \subset F$, then $p$ is ramified in $F$, and only finitely many primes ramify in $F$.)

Let $(x_{n_i})$ be a subsequence constructed in Lemma \ref{lem:coprime}. We may assume that the corresponding primes $p_i, q_i$ are larger than $c$. We prove that this subsequence works by showing that
$$[F(x_{n_1}^{1/h}, x_{n_2}^{1/h}, \ldots , x_{n_k}^{1/h}) : F] = h^k$$
for all $k$. Apply Lemma \ref{lem:garrett}. Assume that
\begin{align}
\label{equ:4}
x_{n_1}^{e_1/h} \cdots x_{n_k}^{e_k/h} \in F, 0 \le e_i < h.
\end{align}
By the choice of $c$, \eqref{equ:4} implies that the prime divisors of $x_{n_1}^{e_1} \cdots x_{n_k}^{e_k}$ which are larger than $c$ have multiplicity divisible by $h$. By the choice of the primes $p_i, q_i$ this implies $h \mid e_i$ for all $i$.
\end{proof}

\section{Linear recurrences and perfect powers}
\label{sec:perfect}

In this section we show that Lemma \ref{lem:linDis} may be applied to the sequence $(c_n)$. Assume not, so $c_{An+B}$ is always a $D$th power of an element in a fixed number field.

By a result of Zannier \cite{zannier}, the only case when a linear recurrence is always a $D$th power is when it is the $D$th power of a linear recurrence. We may hence write
$$c_{An+B} = d_n^D,$$
where $d$ is a linearly recursive sequence (whose elements are not necessarily integers). Write $d$ as an exponential polynomial
$$d_n = \sum_{m = 0}^{t} \left(n^t \sum_{k = 1}^{u_m} e_{m, k}\sigma_{m, k}^n\right),$$
where the coefficients $e_{m, k}$ are non-zero, $\sigma_{m, 1}, \sigma_{m, 2}, \ldots , \sigma_{m, u_m}$ are non-zero and pairwise distinct, and $i_m > 0$ for all $m \le t$. 

We first show that $t = 0$, i.e. that the characteristic polynomial of $(d_n)$ has no repeated roots. Assume not. Now one sees that $d_n^D$ may be written as
$$n^{Dt} \left(\sum_{k = 1}^{u_t} e_{t, k}\sigma_{t, k}^n\right)^D + e_n,$$
where $e_n$ is an exponential polynomial whose polynomial terms have degree less than $Dt$. Since the representation of a linear recurrence as an exponential polynomial is unique, one sees that $c_{An+B} =  d_n^D$ cannot hold for all $n \in \mathbb{Z}$.

We may thus write
$$d_n = e_1\sigma_1^n + \ldots + e_u\sigma_u^n.$$
Each of $\sigma_i$ can be written in the form $\alpha_1^{x_{i, 1}}\alpha_2^{x_{i, 2}} \cdots \alpha_d^{x_{i, d}}$, where $x_{i, j}$ are rational numbers \cite{ritt}. Let $M \in \mathbb{Z}_+$ be such that $x_{i, j}M \in \mathbb{Z}$ for all $i, j$. Write the equation $c_{AMn+B} = d_{Mn}^D$ as
$$(-1)^dN^{h(AMn + B)}\prod_{i = 1}^d \sum_{j \neq i} \frac{\gamma_j\alpha_j^{B}\alpha_j^{AMn}}{\gamma_i\alpha_i^B\alpha_i^{AMn}} = \left(\sum_{i = 1}^u e_i \prod_{j = 1}^d \alpha^{x_{i, j}Mn}\right)^D$$

Note then that if $\prod_{i = 1}^d \alpha_i^{f_i} \in \mathbb{Q}$ for some integers $f_i$, then $f_i = f_j$ for all $i, j$. Indeed: Since the Galois group of $P$ is $S_d$, we have $\prod_{i = 1}^d \alpha_i^{f_i'} = \prod_{i = 1}^d \alpha_i^{f_i}$ for any permutation $f_i'$ of $f_i$. Hence $\alpha_i^{f_i}\alpha_j^{f_j} = \alpha_i^{f_j}\alpha_j^{f_i}$, so $(\alpha_i/\alpha_j)^{f_i - f_j} = 1$, which by non-degeneracy of $P$ implies $f_i = f_j$.

Hence, if $N \neq \pm 1$, one has $\prod_{i = 1}^d \alpha_i^{f_i} = 1$ only if $f_i = 0$ for all $i$. By basic results on linear recurrences, this implies that for any $Q \in \mathbb{C}[X_1^{\pm 1}, \ldots , X_d^{\pm 1}]$ in $d$ variables we have
$$Q(\alpha_1^{n}, \alpha_2^{n}, \ldots , \alpha_d^{n}) = 0$$
for all integers $n$ if and only if $Q$ is identically zero. Hence
$$(-1)^dN^{hB}(X_1 \cdots X_d)^{hAM}\prod_{i = 1}^d \sum_{j \neq i} \frac{\gamma_j\alpha_j^BX_j^{AM}}{\gamma_i\alpha_i^BX_i^{AM}} = \left(\sum_{i = 1}^u e_i\prod_{j = 1}^d X_j^{x_{i, j}M}\right)^D$$
identically as elements of $\mathbb{C}[X_1^{\pm 1}, \ldots , X_d^{\pm 1}]$.

In particular, the left hand side is a perfect $D$th power in $\mathbb{C}[X_1^{\pm 1}, \ldots , X_d^{\pm 1}]$. Perform a suitable transformation of the form $X_i \to c_iX_i$, clear out constants and simplify. One obtains that
\begin{align*}
(X_1 \cdots X_d)^{(h-1)A'}\prod_{i = 1}^d (X_1^{A'} + \ldots + X_d^{A'} - X_i^{A'})
\end{align*}
is a $D$th power of a polynomial, where $A' := AM$. This is clearly impossible if $D \nmid (h-1)A'$. Otherwise we may drop the term $(X_1 \cdots X_d)^{(h-1)A'}$. One sees that the polynomials
$$X_1^{A'} + \ldots + X_d^{A'} - X_i^{A'}$$
are pairwise coprime, and hence each of them must be a $D$th power. This is not the case, as can be seen, for example, by considering the partial derivative of $X_2^{A'} + \ldots + X_d^{A'}$ with respect to $X_2$ at $(X_2, X_3, \ldots , X_d) = (x_0, 1, \ldots , 1)$, where $x_0^{A'} + (d-2) = 0$.

The case $N = \pm 1$ is handled similarly: For any polynomial $Q \in \mathbb{C}[X_1^{\pm 1}, \ldots , X_{d-1}^{\pm 1}]$ we have
$$Q(\alpha_1^n, \alpha_2^n, \ldots , \alpha_{d-1}^n) = 0$$
for all integers $n$ only if $Q$ is zero. Proceeding as before, we have that
$$\prod_{i = 1}^d (X_1^{A'} + \ldots + X_d^{A'} - X_i^{A'})$$
is a $D$th power of a polynomial in the variables $X_1, \ldots , X_{d-1}$ (with possibly negative exponents in the monomials), where $X_d$ is shorthand for $1/X_1 \cdots X_{d-1}$. Note that the term
$$X_1^{A'} + \ldots + X_{d-1}^{A'}$$
is coprime with all the other terms of the product, and, as before, this is not a $D$th power of a polynomial.

\section{Concluding the proof}
\label{sec:conclude}

We aim to prove that almost all primes of $K(\zeta_h)$ split in at least one of the fields $K(\zeta_h, b_n^{1/h})$. By the Chebotarev density theorem it suffices to construct a subsequence $b_{n_1}, b_{n_2}, \ldots$ of $(b_n)$ such that the fields
$$K(\zeta_h, b_{n_1}^{1/h})/K(\zeta_h), K(\zeta_h, b_{n_2}^{1/h})/K(\zeta_h), \ldots$$
are linearly disjoint

In Section \ref{sec:perfect} we checked that we may apply Lemma \ref{lem:linDis} to the norm sequence $(c_n)$. Let $c_{n_1}, c_{n_2}, \ldots$ denote a subsequence given by the lemma with the base field $F(\zeta_h)$, so $$F(\zeta_h, c_{n_i}^{1/h})/F(\zeta_h), i = 1, 2, \ldots$$
are of degree $h$ and linearly disjoint. We claim that this implies that
$$F(\zeta_h, b_{n_i}^{1/h})/F(\zeta_h), i = 1, 2, \ldots$$
are of degree $h$ and linearly disjoint, too. 

By Lemma \ref{lem:garrett} it suffices to show that 
$$b_{n_1}^{e_1/h} \cdots b_{n_k}^{e_k/h} \in F(\zeta_h), 0 \le e_i < h$$
implies $e_i = 0$ for all $i$. But if $b_{n_1}^e \cdots b_{n_k}^{e_k}$ is an $h$th power in $F(\zeta_h)$, the norm $c_{n_1}^e \cdots c_{n_k}^{e_k}$ is a $h$th power in $F(\zeta_h)$, too. By the choice of $c_{n_i}$ this happens only if $e_i = 0$ for all $i$. 

Finally, note that linear disjointness over $F(\zeta_h)$ implies linear disjointness over $K(\zeta_h)$, so $K(\zeta_h, b_{n_1}^{1/h})/K(\zeta_h), K(\zeta_h, b_{n_2}^{1/h})/K(\zeta_h), \ldots$ are linearly disjoint, as desired.

\section{Discussion}
\label{sec:discussion}

The presented proof considers the primes $p$ such that $P$ has factorization type $(1, d-1)$ modulo $p$. Naturally one wonders whether other factorization types could be handled as well. 

Unfortunately, this is the only case our argument is able to handle (for irreducible $P$): The proof relies crucially on transforming the problem of zeros of linear recurrences to perfect power values of linear recurrences modulo primes. To do so, we first extract norms of the roots of $P$ via the substitution $n = k(p^{d-1} - 1)/(p-1) + r$. One has much less control over the norms in finite fields if $P$ has more than two irreducible factors modulo $p$. We then apply Lemma \ref{lem:artin}, which requires that $P$ has a root in $\mathbb{F}_p$. These two limitations result in having to consider the factorization type $(1, d-1)$.

By an approach also based on reduction to a polynomial equation, Roskam has settled the case where $P$ remains irreducible modulo $p$, assuming an analogue of Lemma \ref{lem:artin} holds for roots of $P$ of degree $d$ over $\mathbb{F}_{p}$ (namely that the multiplicative orders of the roots are of magnitude $p^d$ almost always). Note that given a linear recurrence $(a_n)$ with a squarefree characteristic polynomial $P$, the period of $(a_n)$ modulo $p$ is, for large enough primes $p$, equal to the least common multiple of the multiplicative orders of the roots of $P$ in extensions of $\mathbb{F}_p$. Hence Roskam's assumption is equivalent to the period of $(a_n)$ modulo $p$ being of order $p^d$ for almost all primes $p$ for which $P$ is irreducible modulo $p$.

A theorem of Niederreiter \cite[Theorem 4.1]{niederreiter} gives a stronger result under a weaker assumption: as long as the period of the sequence modulo $p$ is at least of magnitude $p^{d/2 + 1}$, the sequence does not only attain the value $0 \pmod{p}$, but the sequence is approximately equidistributed modulo $p$. (While we have managed to avoid the consideration of the period of $a_n \pmod{p}$, our approach does not yield equidistribution results or even non-trivial lower bounds for the number of values attained by $a_n$ modulo $p$.)

We hence see that analogies of Lemma \ref{lem:artin} to roots of $P$ in extensions of $\mathbb{F}_p$ are central to understanding the behavior of linear recurrences modulo primes. While it seems likely that such variants of Lemma \ref{lem:artin} hold (one can present a similar heuristic as for Artin's conjecture), our understanding is very limited. Unconditionally, we only know that given an integer $a, |a| > 1$, the order of $a$ modulo $p$ is almost always $> p^{1/2}$ \cite{erdos-murty}. Under GRH one  has Lemma \ref{lem:artin}, and an involved variant of Hooley's classical (conditional) solution of Artin's conjecture yields an analogue of Lemma \ref{lem:artin} in the case where $P$ is of degree $2$ and remains irreducible modulo $p$ \cite{roskamArtin}. It seems that all other cases are open, and as explained in \cite{roskamArtin}, Hooley's argument does not adapt to higher degrees without new ideas.

We conclude by remarking that the case where $P$ splits into $d$ linear factors modulo $p$ seems to be the most difficult to analyze. In these cases the period of the linear recurrence modulo $p$ divides $p-1$, and heuristically there is a positive density of split primes which are not prime divisors of the sequence (see \cite{roskamPD}). For example, we are not able to say essentially anything about the prime divisors of $3^n + 2^n + 1$ other than that there are infinitely many of them. In contrast, heuristics suggest that if $P$ is irreducible and $\deg(P) \ge 3$, then almost all non-split primes are prime divisors of the corresponding sequence (excluding degenerate cases). This suggests that the lower bound $1/(d-1)$ in Theorem \ref{thm:main} could be replaced with $1 - 1/d!$.

\bibliography{primeDivisorsLinearRecurrences}

\begin{thebibliography}{10}

\bibitem{ballot}
C.~Ballot.
\newblock {\em Density of prime divisors of linear recurrences}, volume 551.
\newblock American Mathematical Soc., 1995.

\bibitem{drmota-skalba}
M.~Drmota and M.~Ska{\l}ba.
\newblock Relations between polynomial roots.
\newblock {\em Acta Arith.}, 71(1):64--77, 1995.

\bibitem{erdos-murty}
P.~Erd{\"o}s and M.~R. Murty.
\newblock On the order of a (mod p).
\newblock In {\em CRM Proceedings and Lecture Notes}, volume~19, pages 87--97,
  1999.

\bibitem{garrett}
P.~Garrett.
\newblock Linear independence of roots.
\newblock
  \nolinkurl{http://www-users.math.umn.edu/~garrett/m/v/linear_indep_roots.pdf}.

\bibitem{hasse}
H.~Hasse.
\newblock Über die dichte der primzahlen p, für die eine vorgegebene
  ganzrationale zahl a $\neq$ 0 von gerader bzw. ungerader ordnung mod.p ist.
\newblock {\em Math. Ann.}, 166:19--23, 1966.

\bibitem{hooley}
C.~Hooley.
\newblock On {A}rtin's conjecture.
\newblock {\em J. Reine Angew. Math.}, 225:209--220, 1967.

\bibitem{lagarias}
J.~C. Lagarias.
\newblock The set of primes dividing the {L}ucas numbers has density $2/3$.
\newblock {\em Pacific J. Math.}, 118(2):449--461, 1985.

\bibitem{lenstra}
H.~W. Lenstra, Jr.
\newblock On {A}rtin's conjecture and {E}uclid's algorithm in global fields.
\newblock {\em Invent. Math.}, 42:201--224, 1977.

\bibitem{moree}
P.~Moree.
\newblock On the prime density of {L}ucas sequences.
\newblock {\em J. Théor. Nombres Bordeaux}, 8(2):449--459, 1996.

\bibitem{moreeSurvey}
P.~Moree.
\newblock Artin's primitive root conjecture -a survey -.
\newblock {\em Integers}, 12, 01 2005.

\bibitem{moree-stevenhagen2}
P.~Moree and P~Stevenhagen.
\newblock Prime divisors of {L}ucas sequences.
\newblock {\em Acta Arith.}, 82(4):403--410, 1997.

\bibitem{moree-stevenhagen}
P.~Moree and P.~Stevenhagen.
\newblock A two-variable {A}rtin conjecture.
\newblock {\em J. Number Theory}, 85(2):291--304, 2000.

\bibitem{moree-stevenhagen3}
P.~Moree and P.~Stevenhagen.
\newblock Prime divisors of the {L}agarias sequence.
\newblock {\em J. Théor. Nombres Bordeaux}, 13(1):241--251, 2001.

\bibitem{mss}
M.~R. Murty, F.~S\'{e}guin, and C.~L. Stewart.
\newblock A lower bound for the two-variable {A}rtin conjecture and prime
  divisors of recurrence sequences.
\newblock {\em J. Number Theory}, 194:8--29, 2019.

\bibitem{niederreiter}
H.~Niederreiter.
\newblock On the cycle structure of linear recurring sequences.
\newblock {\em Math. Scand.}, 38(1):53--77, 1976.

\bibitem{polya}
G.~Pólya.
\newblock Arithmetische {E}igenschaften der {R}eihenentwicklungen rationaler
  {F}unktionen.
\newblock {\em J. Reine Angew. Math.}, 151:1--31, 1921.

\bibitem{ritt}
J.~F. Ritt.
\newblock A factorization theory for functions $\sum_{i = 1}^n
  a_ie^{\alpha_ix}$.
\newblock {\em T. Am. Math. Soc.}, 29(3):584--596, 1927.

\bibitem{roskamPD}
H.~Roskam.
\newblock Prime divisors of linear recurrences and {A}rtin's primitive root
  conjecture for number fields.
\newblock {\em J. Théor. Nombres Bordeaux}, 13(1):303--314, 2001.

\bibitem{roskamArtin}
H.~Roskam.
\newblock Artin's primitive root conjecture for quadratic fields.
\newblock {\em J. Théor. Nombres Bordeaux}, 14(1):287--324, 2002.

\bibitem{stephens}
P.J. Stephens.
\newblock Prime divisors of second-order linear recurrences. i.
\newblock {\em J. Number Theory}, 8(3):313 -- 332, 1976.

\bibitem{stevenhagen}
P.~Stevenhagen.
\newblock Prime densities for second order torsion sequences.
\newblock 2007.

\bibitem{wagstaff}
S.~Wagstaff.
\newblock Pseudoprimes and a generalization of {A}rtin's conjecture.
\newblock {\em Acta Arith.}, 41(2):141--150, 1982.

\bibitem{zannier}
U.~Zannier.
\newblock A proof of {P}isot's dth root conjecture.
\newblock {\em Ann. Math.}, 151(1):375--383, 2000.

\end{thebibliography}
\bibliographystyle{plain}

\end{document}